\newtheorem{proposition}{Proposition}[section]
\newtheorem{corollary}[proposition]{Corollary}
\newtheorem{theorem}[proposition]{Theorem}
\theoremstyle{definition}
\newtheorem{example}[proposition]{Example}
\newtheorem{remark}[proposition]{Remark}
\newcommand{\thlabel}[1]{\label{th:#1}}
\newcommand{\thref}[1]{Theorem~\ref{th:#1}}
\newcommand{\selabel}[1]{\label{se:#1}}
\newcommand{\prlabel}[1]{\label{pr:#1}}
\newcommand{\prref}[1]{Proposition~\ref{pr:#1}}
\newcommand{\colabel}[1]{\label{co:#1}}
\newcommand{\coref}[1]{Corollary~\ref{co:#1}}
\newcommand{\relabel}[1]{\label{re:#1}}
\newcommand{\exlabel}[1]{\label{ex:#1}}
\newcommand{\eqlabel}[1]{\label{eq:#1}}
\newcommand{\equref}[1]{(\ref{eq:#1})}
\def\ot{\otimes}
\def\RR{{\mathbb R}}
\def\CC{{\mathbb C}}
\newcommand{\Cc}{\mathcal{C}}
\def\*C{{}^*\hspace*{-1pt}{\Cc}}
\def\text#1{{\rm {\rm #1}}}
\begin{document}
\title[It\^{o}'s theorem and metabelian Leibniz algebras]
{It\^{o}'s theorem and metabelian Leibniz algebras}

%\title[It\^{o}'s theorem and the classification of Leibniz algebras]
%{It\^{o}'s theorem and the classification of Leibniz algebras with one dimensional derived algebra}

%\title[Metabelian Leibniz algebras]
%{Metabelian Leibniz algebras}

\author{A. L. Agore}
\address{Faculty of Engineering, Vrije Universiteit Brussel, Pleinlaan 2, B-1050 Brussels, Belgium}
\email{ana.agore@vub.ac.be and ana.agore@gmail.com}

\author{G. Militaru}
\address{Faculty of Mathematics and Computer Science, University of Bucharest, Str.
Academiei 14, RO-010014 Bucharest 1, Romania}
\email{gigel.militaru@fmi.unibuc.ro and gigel.militaru@gmail.com}

\thanks{A.L. Agore is Postdoctoral Fellow of the Fund for Scientific Research Flanders (Belgium) (F.W.O.–
Vlaanderen). This work was supported by a grant of the Romanian
National Authority for Scientific Research, CNCS-UEFISCDI, grant
no. 88/05.10.2011.}

\subjclass[2010]{17A32, 17A60, 17B30} \keywords{metabelian Leibniz (Lie) algebras}

%\maketitle

\maketitle

\begin{abstract}
We prove that the celebrated It\^{o}'s theorem for groups remains
valid at the level of Leibniz algebras: if $\mathfrak{g}$ is a
Leibniz algebra such that $\mathfrak{g} = A + B$, for two abelian
subalgebras $A$ and $B$, then $\mathfrak{g}$ is metabelian, i.e.
$[ \, [\mathfrak{g}, \, \mathfrak{g}], \, [ \mathfrak{g}, \,
\mathfrak{g} ] \, ] = 0$. A structure type theorem for metabelian
Leibniz/Lie algebras is proved. All metabelian Leibniz algebras
having the derived algebra of dimension $1$ are described,
classified and their automorphisms groups are explicitly
determined as subgroups of a semidirect product of groups $P^*
\ltimes \bigl(k^* \times {\rm Aut}_{k} (P) \bigl)$ associated to
any vector space $P$.
\end{abstract}

\section*{Introduction}
An old problem addressed in group theory in the fifties is the
following (\cite[pg. 18]{AFG}): for two given abelian groups $A$
and $B$ describe and classify all groups $G$ which can be written
as a product $G = AB$. Surprisingly, the problem turned out to be
a very difficult one and lead to the development of a vast
literature (see \cite{AFG} and the references therein). However,
the celebrated It\^{o}'s theorem \cite{ito} remains the most
satisfying result obtained so far (cf. \cite{AFG, cond}): if $G =
AB$ is a product of two abelian subgroups, then $G$ is metabelian.
It\^{o}'s theorem provides an important piece of information on
this problem and it was the foundation of many structural results
for finite groups.

The first aim of this paper it to prove that It\^{o}'s theorem
remains valid at the level of Leibniz algebras (\coref{itoLie}):
if $\mathfrak{g}$ is a Leibniz algebra such that $\mathfrak{g} = A
+ B$, for two abelian subalgebras $A$ and $B$, then $\mathfrak{g}$
is metabelian ($2$-step solvable) - the corresponding theorem for
Lie algebras is well known \cite{Baht, Burde, Petra}. The result
is a consequence of a more general statement (\thref{itoLbz})
whose proof, even if is different from It\^{o}'s original argument
used in the case of groups or Lie algebras, it is also based on a
commutator computation involving this time the Leibniz law and a
surprising partial skew-symmetry that exists at the level of the
derived algebra of any Leibniz algebra. A possible generalization
of It\^{o}'s theorem is also discussed: more precisely, we show by
a counterexample that in a certain sense \coref{itoLie} is the
best result we can hope for. Also, we prove again by
counterexamples that the converse of It\^{o}'s theorem does not
hold for Lie algebras nor Leibniz algebras. The study of
metabelian Lie algebras concerns mainly the free metabelian Lie
algebras in connection to algebraic geometry, their automorphisms
groups or Gr\"{o}bner-–Shirshov bases \cite{chen, Dang, dani, Dre,
poro}. On the other hand, the study of Leibniz algebras as
non-commutative generalizations of Lie algebras, is an active
research field arising from homological algebra, classical or
non-commutative differential geometry, vertex operator algebras or
integrable systems. For further details see \cite{am-2013b, ACM,
cam, CK, casa, covez, cu, demir, Dre, fialo, LoP, omirov} and
their references: we just mention that one of the open questions
consists of finding a generalization of Lie's third theorem in the
setting of Leibniz algebras \cite{covez}. Related to It\^{o}'s
theorem for Lie algebras we have to mention another old problem
initiated by Schur in 1905 concerned with finding the abelian
subalgebras of maximal dimension in a finite dimensional Lie
algebra: for recent progress on this problem see \cite{BurdeC,
cebal}.

Turning to the problem we started with, we can state its
counterpart at the level of Lie (resp. Leibniz) algebras:
\emph{for two given abelian algebras $A$ and $B$, describe and
classify all Lie (resp. Leibniz) algebras $\mathfrak{g}$
containing $A$ and $B$ as subalgebras such that $\mathfrak{g} = A
+ B$}. In the finite case, taking into account \coref{itoLie},
this problem is a special case of the classification problem of
all metabelian Lie (resp. Leibniz) algebras. Any metabelian
Lie/Leibniz algebra is a $2$-step solvable Lie/Leibniz algebra.
The classification of all solvable Lie algebras was achieved up to
dimension $4$ over arbitrary fields \cite{gra2} and up to
dimension $6$ over $\RR$ \cite{tur}. On the other hand the
classification of all solvable Leibniz algebras is much more
difficult and is known only up to dimension $4$ over the complex
field $\CC$ \cite{CK, casa, demir}. Thus, it is more likely to be
able to classify metabelian Lie (resp. Leibniz) algebras of a
given dimension rather than solvable ones. The first step towards
this goal is given in \thref{structura} (resp.
\coref{liemetabstr}), where a structure type theorem for
metabelian Leibniz (resp. Lie) algebras is proposed. As an
application, in \thref{clasific1} all metabelian Leibniz algebras
having the derived algebra of dimension $1$ are explicitly
described and classified: there are three families of such Leibniz
algebras. However, in order to write down the isomorphism classes,
it turns out that a more general theory of Kronecker-Williamson's
classification of bilinear forms \cite{sz} needs to be developed.
\coref{automr} offers the detailed description of the
automorphisms groups of these Leibniz algebras: they are subgroups
of a canonical semidirect product of groups $P^* \ltimes \bigl(k^*
\times {\rm Aut}_{k} (P) \bigl)$ that we construct for any vector
space $P$.

\section{It\^{o}'s theorem for Leibniz algebras}\selabel{itolbz}

\subsection*{Notations and terminology}
All vector spaces, (bi)linear maps, Lie or Leibniz algebras are
over an arbitrary field $k$ whose group of units will be denoted
by $k^*$. For a vector space $P$ we denote its dual by $P^* = {\rm
Hom}_k (P, \, k)$ and by ${\rm Aut}_k (P)$ the group of linear
automorphisms of $P$. A map $f: V \to W$ between two vector spaces
is called the trivial map if $f (v) = 0$, for all $v\in V$. A
Leibniz algebra is a vector space $\mathfrak{g}$, together with a
bilinear map $[- , \, -] : \mathfrak{g} \times \mathfrak{g} \to
\mathfrak{g}$ satisfying the Leibniz identity for any $x$, $y$, $z
\in \mathfrak{g}$:
\begin{equation}\eqlabel{Lbz1}
\left[ x,\, \left[y, \, z \right] \right] = \left[ \left[x, \,
y\right], \, z \right] - \left[\left[x, \, z\right] , \, y\right].
\end{equation}
Any Lie algebra is a Leibniz algebra, and a Leibniz algebra
$\mathfrak{g}$ satisfying $[x, \, x] = 0$, for all $x \in
\mathfrak{g}$ is a Lie algebra. Thus, Leibniz algebras are non
skew-symmetric generalizations of Lie algebras. However, for any
Leibniz algebra $\mathfrak{g}$ there exists a partial
skew-symmetry on the derived algebra. More precisely, the
following formula which will be used in the proof of
\thref{itoLbz}, holds for any $x$, $y$, $z$, $t \in \mathfrak{g}$:
\begin{equation}\eqlabel{surpriza}
\left[ \, \left[x,\, y \right], \, \left[z, \, t \right] \,
\right] = - \left[\, \left[ x, \, y \right], \, \left[t, \, z
\right] \, \right].
\end{equation}
Indeed, the equality \equref{surpriza} follows by applying two
times the Leibniz law as follows:
$$
\left[ \, \left[ x, \, y \right], \, \left[ z, \, t \right] \,
\right] \stackrel{\equref{Lbz1}}{=} \left[ \, \left[ \, \left[ x,
\, y \right], \, z \right], \, t \right] - \left[ \, \left[ \,
\left[ x, \, y \right], \, t \right], \, z \right]
\stackrel{\equref{Lbz1}}{=} - \left[\, \left[ x, \, y \right], \,
\left[ t , \, z \right] \, \right].
$$
We shall denote by ${\rm Aut}_{\rm Lbz} (\mathfrak{g})$ (resp.
${\rm Aut}_{\rm Lie} (\mathfrak{g})$) the automorphisms group of a
Leibniz (resp. Lie) algebra $\mathfrak{g}$. Any vector space $V$
is a Leibniz algebra with the trivial bracket $[x,\, y] = 0$, for
all $x$, $y\in V$ -- such a Leibniz algebra is called
\emph{abelian} and will be denoted by $V_0$. For two subspaces $A$
and $B$ of a Leibniz algebra $\mathfrak{g}$ we denote by $[A, \,
B]$ the vector space generated by all brackets $[a, \, b]$, for
any $a \in A$ and $b\in B$. In particular, $\mathfrak{g}' :=
[\mathfrak{g}, \, \mathfrak{g}]$ is called the derived subalgebra
of $\mathfrak{g}$. A Leibniz or a Lie algebra $\mathfrak{g}$ is
called \emph{metabelian} if $\mathfrak{g}'$ is an abelian
subalgebra of $\mathfrak{g}$, i.e. $[ \, [\mathfrak{g}, \,
\mathfrak{g}], \, [ \mathfrak{g}, \, \mathfrak{g} ] \, ] = 0$.
Hence, a metabelian Leibniz/Lie algebra is just a solvable algebra
of length $2$: this definition is exactly the counterpart of the
concept of a metabelian group used for Lie algebras in \cite{Dang,
dani, poro} and for Leibniz algebras in \cite{Dre}. We mention
that a more restrictive definition appeared in \cite{gauger, Gali}
where the term 'metabelian' is used to denote a $2$-step nilpotent
Lie algebra, i.e. $[ \mathfrak{g}, \, [\mathfrak{g}, \,
\mathfrak{g}] \, ] = 0$. Any $2$-step nilpotent Lie algebra is
metabelian; however, the Lie algebra $\mathfrak{b} (2, k)$ of
$2\times 2$ upper triangular matrices (non-zero diagonals are
permitted) is metabelian but not nilpotent. A partial converse is
proved in \cite[Lemma 3.3]{delbar}: any metabelian Lie algebra
with an ad-invariant metric form is a $3$-step nilpotent Lie
algebra. A subspace $\mathfrak{h}$ of a Leibniz algebra
$\mathfrak{g}$ is called a \emph{two-sided ideal} of
$\mathfrak{g}$  if $[h, \, g]$ and $[g, \, h] \in \mathfrak{h}$,
for all $h \in \mathfrak{h}$ and $g \in \mathfrak{g}$. In the
proof of \thref{itoLbz} we use intensively the Leibniz law as
given in \equref{Lbz1} as well as the following equivalent form of
it:
\begin{equation}\eqlabel{Lbz2}
\left[\left[x,\, z \right],\, y\right] = \left[\left[x, \,
y\right],\, z\right] - \left[x, \, \left[y, \, z\right]\right].
\end{equation}

The next result holds for Leibniz algebras defined over any
commutative ring $k$.

\begin{theorem}\thlabel{itoLbz}
Let $\mathfrak{g}$ be a Leibniz algebra and $A$, $B$ two abelian
subalgebras of $\mathfrak{g}$. Then any two-sided ideal
$\mathfrak{h}$ of $\mathfrak{g}$ contained in $A + B$ is a
metabelian Leibniz subalgebra of $\mathfrak{g}$.
\end{theorem}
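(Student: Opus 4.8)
The goal is to show that the derived subalgebra $\mathfrak{h}' = [\mathfrak{h}, \mathfrak{h}]$ satisfies $[\mathfrak{h}', \mathfrak{h}'] = 0$. First I would record the easy structural reductions. Since a two-sided ideal is in particular a subalgebra, $\mathfrak{h}$ is a Leibniz subalgebra and $\mathfrak{h}' \subseteq \mathfrak{h} \subseteq A + B$; thus every generator of $\mathfrak{h}'$ is a bracket $[h_1, h_2]$ with $h_i = a_i + b_i$, $a_i \in A$, $b_i \in B$. Because $A$ and $B$ are abelian, $[a_1, a_2] = [b_1, b_2] = 0$, so each generator collapses to the mixed part $[h_1, h_2] = [a_1, b_2] + [b_1, a_2]$. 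By bilinearity it therefore suffices to prove that $[[h_1, h_2], [h_3, h_4]] = 0$ for arbitrary $h_1, \dots, h_4 \in \mathfrak{h}$.

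The engine of the proof is a commutator computation built on three ingredients: the two equivalent forms \equref{Lbz1} and \equref{Lbz2} of the Leibniz law, the abelian relations $[a, a'] = [b, b'] = 0$, and the partial skew-symmetry \equref{surpriza} (equivalently, the identity $[z, [p, q]] = -[z, [q, p]]$ valid for every $z \in \mathfrak{g}$, which follows at once from \equref{Lbz1}). The crucial point that makes the containment $\mathfrak{h} \subseteq A + B$ usable is a component-extraction trick: writing $v := [h_3, h_4] \in \mathfrak{h}' \subseteq A + B$ as $v = \gamma + \delta$ with $\gamma \in A$, $\delta \in B$, the abelian relations let me isolate its components by bracketing against $A$- and $B$-elements, e.g. $[a_1, v] = [a_1, \delta]$ and $[v, b_2] = [\gamma, b_2]$, while expanding the same expressions through \equref{Lbz1}/\equref{Lbz2} rewrites them purely in terms of iterated brackets of the $a_i$ and $b_j$.

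With these tools I would expand $[[h_1, h_2], [h_3, h_4]]$ by first splitting $[h_1, h_2] = [a_1, b_2] + [b_1, a_2]$ and $v = \gamma + \delta$, then reduce $[\,[a_1, b_2] + [b_1, a_2],\, \gamma + \delta\,]$ via \equref{Lbz2} so that in every summand a same-type bracket ($[A, A]$ or $[B, B]$) surfaces and vanishes; this leaves an expression in the four cross-terms $[a_1, \delta]$, $[\gamma, b_2]$, $[b_1, \gamma]$, $[\delta, a_2]$. Substituting the values of these cross-terms computed by Leibniz turns the whole expression into a finite $\ZZ$-linear combination of triple and quadruple iterated brackets in $a_1, a_2, a_3, a_4, b_1, b_2, b_3, b_4$. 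The final step is to apply \equref{surpriza} to pair these surviving terms with partners of opposite sign, forcing the total to collapse to $0$; since every manipulation is integral, the theorem holds over an arbitrary commutative ring $k$ as claimed.

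The hardest part will be this last step: the bookkeeping of the many nested brackets and the orchestration of their cancellation. The genuine obstacle is that Leibniz algebras carry skew-symmetry only in the right inner slot (\equref{surpriza}) and none in the left, so the reductions must be arranged to feed every term into exactly that right-slot symmetry. This is precisely why the hypothesis $\mathfrak{h} \subseteq A + B$ (rather than merely ``$A$, $B$ abelian'') is indispensable: without the containment the mixed brackets cannot be re-decomposed, and indeed $[[a,b],[a',b']]$ need not vanish for two abelian subalgebras of a general Leibniz algebra.
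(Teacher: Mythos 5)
Your opening reductions coincide with the paper's (write $h_i = a_i + b_i$, use abelianness to collapse $[h_1,h_2]$ to $[a_1,b_2]+[b_1,a_2]$), but the heart of the proof is missing, and the route you describe for it breaks down. The paper's proof turns entirely on the \emph{two-sided ideal} hypothesis, which your proposal never invokes: the paper observes that $[a_3,b_2]=[a_3,x_2]$, $[a_1,b_4]=[x_1,b_4]$, $[b_1,a_3]=[x_1,a_3]$, $[b_4,a_2]=[b_4,x_2]$ are brackets of an arbitrary element of $\mathfrak{g}$ with an element of $\mathfrak{h}$, hence lie in $\mathfrak{h}\subseteq A+B$ by ideal-ness and can be re-decomposed as in \equref{Lbz3}; a second pass of \equref{Lbz1}/\equref{Lbz2} then kills every term against abelianness. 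You instead decompose only the outer element $v=[h_3,h_4]=\gamma+\delta$, which uses merely that $\mathfrak{h}$ is a \emph{subalgebra} contained in $A+B$. Consequently, if your computation closed up formally it would prove the much stronger assertion that every subalgebra (not just every two-sided ideal) contained in $A+B$ is metabelian --- a statement the paper pointedly does not make and your sketch certainly does not establish. Your closing diagnosis, that the containment $\mathfrak{h}\subseteq A+B$ is the indispensable ingredient, misreads the hypotheses: the containment alone never lets you re-decompose a mixed bracket $[a_i,b_j]$, since such a bracket need not lie in $\mathfrak{h}$; it is ideal-ness that puts it there.

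Moreover, the final cancellation does not happen as you describe. Executing your reduction gives $[[a_1,b_2]+[b_1,a_2],\gamma+\delta]=-[a_1,[\gamma,b_2]]+[[a_1,\delta],b_2]+[[b_1,\gamma],a_2]-[b_1,[\delta,a_2]]$, with $[\gamma,b_2]=[v,b_2]$, $[a_1,\delta]=[a_1,v]$, $[b_1,\gamma]=[b_1,v]$, $[\delta,a_2]=[v,a_2]$. From here there are two natural continuations, and both fail. If you re-expand $[[a_1,v],b_2]$ and $[[b_1,v],a_2]$ by \equref{Lbz2}, the summands $[[a_1,b_2],v]$ and $[[b_1,a_2],v]$ reappear on the right-hand side and the remaining terms cancel in pairs by the right-slot skew-symmetry, so the identity degenerates to $0=0$: the manipulation is circular. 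If instead you substitute $v=[a_3,b_4]+[b_3,a_4]$ and expand by Leibniz, you are left with eight quadruple brackets such as $-[a_1,[[a_3,b_2],b_4]]$, $-[[[a_1,b_4],a_3],b_2]$, $[[[a_1,b_3],a_4],b_2]$, $[[[b_1,a_3],b_4],a_2]$, whose inner factors $[a_3,b_2]$, $[a_1,b_4]$, $[b_1,a_3]$, $\dots$ are exactly the brackets that the paper tames through the ideal property; under your hypotheses they are unconstrained elements of $\mathfrak{g}$. The identity \equref{surpriza} only exchanges the two entries of a bracket sitting in the right-hand slot; it cannot touch a left-nested term like $[[[a_1,b_4],a_3],b_2]$, and no pairing with opposite signs exists among these eight terms. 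So the decisive step --- the actual vanishing --- is unproven, and to repair the argument you must do what the paper does: use ideal-ness to write each of those inner brackets as an element $a_j+b_j$ of $A+B$ and run the Leibniz expansion once more.
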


\begin{proof}
We have to prove that $\left[ \, \left[x_1, \, x_2\right], \,
\left[x_3, \, x_4\right] \, \right] = 0$, for all $x_1$, $x_2$,
$x_3$, $x_4 \in \mathfrak{h}$. Since $\mathfrak{h} \subseteq A +
B$ we can find $a_{i} \in A$, $b_{i} \in B$ such that $x_i = a_i +
b_i$, for all $i = 1, \cdots, 4$. Using that $A$ and $B$ are both
abelian subalgebras of $\mathfrak{g}$ we obtain:
\begin{eqnarray*}
&& \left[ \, \left[x_1, \, x_2\right], \, \left[x_3, \, x_4\right]
\, \right] = \left[ \, \left[a_{1} + b_{1}, \, a_{2} +
b_{2}\right], \, \left[a_{3}
+ b_{3}, \, a_{4} + b_{4}\right] \, \right] \\
&& = \left[ \, \left[ a_{1}, \, b_{2}\right] + \left[b_{1}, \,
a_{2}\right],\, \left[a_{3},\, b_{4}\right] + \left[b_{3},\,
a_{4}\right] \, \right]\\
&& = \left[ \, \left[ a_{1}, \, b_{2}\right],\, \left[a_{3},\,
b_{4}\right] \, \right] + \left[ \, \left[ a_{1}, \,
b_{2}\right],\, \left[b_{3},\, a_{4}\right] \, \right] + \left[ \,
\left[b_{1}, \, a_{2}\right],\, \left[a_{3},\, b_{4}\right] \,
\right] + \left[ \, \left[b_{1}, \, a_{2}\right],\, \left[b_{3},\,
a_{4}\right] \, \right].
\end{eqnarray*}
The proof will be finished once we show that any member of the
last sum is equal to zero. Indeed, using that $A$ and $B$ are
abelian subalgebras and $\mathfrak{h}$ is a two-sided ideal of
$\mathfrak{g}$ we obtain: $[a_3, \, b_2] = [a_3, \, a_2 + b_2] =
[a_3, \, x_2] \in h$ and similarly $[a_1, \, b_4] = [x_1, \,
b_4]$, $[b_1, \, a_3] = [x_1, \, a_3]$ and $[b_4, \, a_2] = [b_4,
\, x_2]$ are all elements of $\mathfrak{h}$. Thus, we can find
some elements $a_{j} \in A$, $b_{j} \in B$, $j = 5, \cdots, 8$
such that:
\begin{eqnarray}
\left[a_3, \, b_2\right] = a_{5} + b_{5}, \quad \left[a_1, \, b_4
\right] = a_{6} + b_{6}, \quad \left[b_1, \, a_3\right] = a_{7} +
b_{7}, \quad \left[b_4, \, a_2\right] = a_{8} + b_{8}.
\eqlabel{Lbz3}
\end{eqnarray}
Then, using the fact that $A$ and $B$ are both abelian, we obtain:
\begin{eqnarray*}
\left[\, \left[a_1,\, b_2\right], \, \left[a_3, \, b_4 \right] \,
\right] &\stackrel{\equref{Lbz1}}{=}& \left[ \,
\underline{\left[\left[a_1, \, b_2\right], \, a_3\right]}, \, b_4
\right] - \left[ \, \underline{\left[\left[a_1,
\, b_2\right], \, b_4\right]}, \, a_3 \right]\\
&\stackrel{\equref{Lbz2}}{=}& \left[ \, \left[\left[a_1, \, a_3
\right], \, b_2\right] - \left[a_1, \, \left[a_3, \, b_{2}\right]
\, \right], \, b_4 \right] - \left[ \, \left[\left[a_1, \, b_4
\right], \, b_2\right] - \left[a_1, \,
\left[b_4, \, b_2\right] \, \right], \, a_3\right]\\
&=& - \left[ \, \left[a_1, \, \underline{\left[a_{3}, \,
b_2\right]} \, \right], \, b_4 \right] - \left[ \,
\left[\underline{\left[a_1, \, b_4\right]}, \,
b_2\right], \, a_3\right]\\
&\stackrel{\equref{Lbz3}}{=}& - \left[ \, \left[a_1, \, a_{5} +
b_{5} \right], \, b_4 \right] - \left[ \, \left[a_{6} +
b_{6}, \, b_2 \right], \, a_3 \right]\\
&=& - \underline{\left[\left[a_{1}, \, b_{5}\right], \, b_4
\right]} - \underline{\left[\left[a_{6},
\, b_2\right], \, a_{3}\right]}\\
&\stackrel{\equref{Lbz2}}{=}& \left[a_1, \, \left[b_4, \,
b_{5}\right] \, \right] - \left[ \, \left[a_1, \, b_4 \right], \,
b_{5}\right] + \left[a_{6}, \left[a_3, \, b_2\right]\, \right]
- \left[\, \left[a_{6},\, a_3\right], \, b_2\right]\\
&=& - \left[\underline{\left[a_{1}, \, b_4\right]}, \,
b_{5}\right] + \left[a_{6}, \underline{\left[a_{3}, \,
b_2\right]} \, \right]\\
&\stackrel{\equref{Lbz3}}{=}& - \left[a_{6} + b_{6}, \,
b_{5}\right] + \left[a_{6}, a_{5} + b_{5}\right] = - \left[a_{6},
\, b_{5}\right] + \left[a_{6}, b_{5}\right] = 0.
\end{eqnarray*}
Thus, $\left[\, \left[a_1,\, b_2\right], \, \left[a_3, \, b_4
\right] \, \right]= 0$. A similar computation shows that $\left[\,
\left[ a_1, \, b_2\right], \, \left[a_4, \, b_3 \right] \, \right]
= 0$. This implies, using \equref{surpriza}, that $ \left[ \,
\left[a_1, \, b_2 \right], \, \left[b_3, \, a_4 \right] \, \right]
= - \left[\, \left[ a_1, \, b_2\right], \, \left[a_4, \, b_3
\right] \, \right] = 0$, that is the second term of the above sum
is also equal to zero. Furthermore, we have:
\begin{eqnarray*}
\left[ \, \left[b_1, \, a_2\right], \, \left[a_3, \, b_4 \right]
\, \right] &\stackrel{\equref{Lbz1}}{=}&\left[ \,
\underline{\left[\left[b_1, \, a_2\right], \, a_3\right]}, \, b_4
\right] -\left[ \, \underline{\left[\left[b_1,
\, a_2\right], \, b_4\right]}, \, a_3\right]\\
&\stackrel{\equref{Lbz2}}{=}&\left[ \, \left[\left[b_1, \, a_3
\right],\, a_2\right] - \left[b_1, \, \left[a_3, \, a_2\right] \,
\right],\, b_4 \right] - \left[ \, \left[ \, \left[b_1, \, b_4
\right],\, a_2 \right] - \left[b_1,
\, \left[b_4, \, a_2\right] \, \right],\, a_3\right]\\
&=& \left[ \, \left[\underline{\left[b_1, \,  a_3\right]},\, a_2
\right],\, b_4\right] + \left[ \, \left[b_1, \,
\underline{\left[b_4, \, a_2\right]} \, \right],\, a_3 \right]\\
&\stackrel{\equref{Lbz3}}{=}& \left[ \, \left[a_{7} + b_{7},\, a_2
\right], \, b_4 \right] + \left[ \, \left[b_1, \,
a_{8} + b_{8}\right], \, a_3 \right]\\
&=& \underline{\left[\left[b_{7},\, a_2\right],\,
b_4\right]} + \underline{\left[\left[b_1, \, a_{8}\right],\, a_3\right]}\\
&\stackrel{\equref{Lbz2}}{=}& \left[ \, \left[b_{7}, \, b_4
\right], \, a_2 \right] - \left[b_{7}, \, \left[b_4, \, a_2\right]
\, \right] + \left[ \, \left[b_1, \, a_3 \right], \, a_{8} \right]
- \left[ b_1, \,
\left[a_3 , \, a_{8}\right] \, \right]\\
&=&  - \left[b_{7}, \, \underline{\left[b_4, \, a_2\right]} \,
\right] + \left[\underline{\left[b_1, \,
a_3\right]}, \, a_{8}\right]\\
&\stackrel{\equref{Lbz3}}{=}&  - \left[b_{7}, \, a_{8} +
b_{8}\right] + \left[a_{7} + b_{7}, \, a_{8}\right] = -
\left[b_{7}, \, a_{8}\right] + \left[b_{7}, \, a_{8}\right] = 0
\end{eqnarray*}
i.e. $\left[ \, \left[b_1, \, a_2\right], \, \left[a_3, \, b_4
\right] \, \right] = 0$, that is the third term of the above sum
is also equal to zero. A similar computation will prove that
$\left[ \, \left[b_1,\, a_2\right], \, \left[a_4, \, b_3 \right]
\, \right] = 0$. Finally, applying once again \equref{surpriza},
we obtain that $\left[ \, \left[b_1,\, a_2 \right], \, \left[b_3,
\, a_4\right] \, \right] = - \left[ \, \left[b_1,\, a_2\right], \,
\left[a_4, \, b_3 \right] \, \right] = 0$ and the proof is now
finished.
\end{proof}

As a special case of \thref{itoLbz} we obtain:

\begin{corollary}\colabel{itoLie} \textbf{(It\^{o}'s theorem for Leibniz algebras)}
Let $\mathfrak{g}$ be a Leibniz algebra over a commutative ring
$k$ such that $\mathfrak{g} = A + B$, for two abelian subalgebras
$A$ and $B$ of $\mathfrak{g}$. Then $\mathfrak{g}$ is metabelian.
\end{corollary}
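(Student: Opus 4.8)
The plan is to derive the statement immediately from \thref{itoLbz} by taking the two-sided ideal as large as possible, namely $\mathfrak{h} = \mathfrak{g}$ itself. Since \thref{itoLbz} asserts that every two-sided ideal of $\mathfrak{g}$ contained in $A + B$ is a metabelian Leibniz subalgebra, the only work left is to check that $\mathfrak{g}$ meets both hypotheses imposed on $\mathfrak{h}$.

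First I would observe that $\mathfrak{g}$ is trivially a two-sided ideal of itself: for any $h$, $g \in \mathfrak{g}$ one has $[h, \, g]$, $[g, \, h] \in \mathfrak{g}$ by the very definition of the bracket. Second, the hypothesis $\mathfrak{g} = A + B$ gives in particular the inclusion $\mathfrak{g} \subseteq A + B$ required by the theorem. Hence $\mathfrak{h} := \mathfrak{g}$ is a two-sided ideal of $\mathfrak{g}$ contained in $A + B$, and \thref{itoLbz} applies verbatim, yielding that $\mathfrak{g}$ is metabelian, i.e. $[ \, [\mathfrak{g}, \, \mathfrak{g}], \, [\mathfrak{g}, \, \mathfrak{g}] \, ] = 0$, which is exactly the claim.

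I do not expect any genuine obstacle here: all of the analytic content has already been carried out in \thref{itoLbz} --- the fourfold expansion of $[ \, [x_1, \, x_2], \, [x_3, \, x_4] \, ]$ that uses the abelianness of $A$ and $B$ to discard the $[a_i, \, a_j]$ and $[b_i, \, b_j]$ terms, the repeated applications of the Leibniz identities \equref{Lbz1} and \equref{Lbz2}, and the partial skew-symmetry \equref{surpriza} that lets one halve the number of summands to be treated. The corollary is therefore precisely the specialization $\mathfrak{h} = \mathfrak{g}$ of that result, and since \thref{itoLbz} was already established over an arbitrary commutative ring $k$, no further hypothesis is needed.
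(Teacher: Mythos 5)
Your proposal is correct and matches the paper exactly: the paper presents \coref{itoLie} as a special case of \thref{itoLbz}, obtained by taking $\mathfrak{h} = \mathfrak{g}$, which is a two-sided ideal of itself and is contained in $A + B$ by hypothesis. Nothing further is needed.
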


\begin{example} \exlabel{bicrossed}
As an application of \coref{itoLie} we obtain that any
\emph{bicrossed product} $V_0 \bowtie P_0$ associated to a
\emph{matched pair} $(V_0, \, P_0 , \, \triangleleft, \,
\triangleright, \, \leftharpoonup, \, \rightharpoonup)$ of two
abelian Leibniz algebras $V_0$ and $P_0$ is a metabelian Leibniz
algebra. For the technical axioms defining a matched pair of
Leibniz algebras and the construction of the associated bicrossed
product we refer to \cite[Definition 4.5]{am-2013b}.
\end{example}

\begin{example}\exlabel{nugener}
It\^{o}'s theorem (\coref{itoLie}) can not be generalized in the
following direction: if $\mathfrak{g}$ is a Lie algebra such that
$\mathfrak{g} = A + B$, where $A$ is a metabelian subalgebra and
$B$ an abelian subalgebra, then $\mathfrak{g}$ is not necessarily
a metabelian Lie algebra. Indeed, let $\mathfrak{g}$ be the
$4$-dimensional Lie algebra (denoted by $L_5$ in \cite{ag}) having
the basis $\{e_1, \cdots, e_4\}$ and the bracket
$$
\left[ e_1, \, e_2\right] = e_2, \quad \left[ e_1, \, e_3\right] =
e_3, \quad \left[ e_1, \, e_4 \right] = 2\, e_4, \quad   \left[
e_2, \, e_3\right] = e_4.
$$
Then, $[ \, [ e_1, \, e_2 ], \, [ e_1, \, e_3 ] \, ] = e_4$, i.e.
$\mathfrak{g}$ is not metabelian. On the other hand, we have the
decomposition $\mathfrak{g} = A + B$, where $A$ is the metabelian
subalgebra of $\mathfrak{g}$ having $\{e_1, \, e_2\}$ as a basis
and $B$ is the abelian subalgebra of $\mathfrak{g}$ with the basis
$\{e_3, \, e_4\}$.
\end{example}

We end the section by looking at that the converse of
\coref{itoLie}. More precisely, we prove by counterexamples that
the converse of It\^{o}'s theorem fails to be true for both Lie
algebras and Leibniz algebras.

\begin{example}\exlabel{3dimlbz}
Let $\mathfrak{g}$ be the $3$-dimensional metabelian Leibniz
algebra over the field $\RR$ having the basis $\{e_1, \, e_2, \,
e_3\}$ and the bracket:
$$
[e_{2}, \, e_{2}] = e_{1}, \qquad [e_{3}, \, e_{3}] = e_{1}.
$$
We will prove that $\mathfrak{g}$ can not be written as a sum of
two abelian subalgebras. Indeed, suppose that $A$ is an abelian
subalgebra of dimension $1$ with basis $x = a e_{1} + b e_{2} + c
e_{3}$, $a$, $b$, $c \in k$. As $A$ is abelian we have $[x,\,x] =
0$ and this implies $b^{2} + c^{2} = 0$. Therefore, $b = c = 0$
and thus $A$ is generated by $e_{1}$. Assume now that $B$ is an
abelian subalgebra of dimension $2$ with basis $\{x = a_{1} e_{1}
+ a_{2} e_{2} + a_{3} e_{3}, \, y = b_{1} e_{1} + b_{2} e_{2} +
b_{3} e_{3}\}$. Again by the fact that $B$ is abelian we obtain
$a_{2} = a_{3} = b_{2} = b_{3} = 0$. Hence $\mathfrak{g}$ has no
abelian subalgebras of dimension $2$ and the conclusion follows.
\end{example}

Over the complex field $\CC$, the metabelian Lie algebra of
smallest dimension which can not be written as a sum of two proper
abelian subalgebras has dimension $5$. Indeed, if we consider the
classification of $3$ and $4$-dimensional complex Lie algebras
(see for instance \cite[Table 1 and 2]{ag}) one can easily see
that any metabelian Lie algebra in those lists can be written as a
direct sum of two proper abelian subalgebras.

\begin{example}\exlabel{5dimlie}
Let $\mathfrak{g}$ be the $5$-dimensional metabelian Lie algebra
having the basis $\{e_1, \, e_2, \, e_3, \, e_4, \, e_5\}$ and the
bracket given by:
\begin{equation}\eqlabel{brackmin}
[e_{1}, \, e_{2}] = e_{3}, \quad [e_{1}, \, e_{3}] = e_{4}, \quad
[e_{2}, \, e_{3}] = e_{5}.
\end{equation}
We will prove that $\mathfrak{g}$ can not be written as a sum of
two abelian subalgebras based on the following remark. Let $a =
\sum_{i=1}^{5} \, a_{i} \, e_{i}$ and $b = \sum_{i=1}^{5} \, b_{i}
\, e_{i}$, be two non-zero elements of an abelian Lie subalgebra
of $\mathfrak{g}$. Since $[a, \, b] = 0$, it follows from
\equref{brackmin}, that
\begin{equation}\eqlabel{aaa1}
a_{1} \, b_{2} = a_{2} \, b_{1}, \quad a_{1} \, b_{3} = a_{3} \,
b_{1}, \quad a_{2} \, b_{3} = a_{3} \, b_{2},
\end{equation}
that is, the rank of the following matrix
$$
\begin{pmatrix}
a_{1} & a_{2} & a_{3}\\
b_{1} & b_{2} & b_{3}
\end{pmatrix}
$$
is at most $1$. Based on this observation we will prove that ${\rm
dim_{k}} (A + B) \leq 4$, for any $A$, $B$ abelian subalgebras of
$\mathfrak{g}$. Assume that $\mathfrak{g}$ is the sum of two
abelian subalgebras, say, $\mathfrak{g} = A + B$. As
$\mathfrak{g}$ is not abelian we have ${\rm dim_{k}} (A) \geq 1$
and ${\rm dim_{k}} (B) \geq 1$. Let $l = {\rm dim_{k}} (A)$ and $t
= {\rm dim_{k}} (B)$ and consider $\mathcal{B}_{A} = \{X_{1},...,
\, X_{l}\}$, respectively $\mathcal{B}_{B} = \{Y_{1},..., \,
Y_{t}\}$, $k$-bases for $A$ and respectively $B$. As ${\rm
dim_{k}} (A + B) = 5$, we can find $5$ linearly independent
vectors among the generators $\{X_{1},..., \, X_{l}, \, Y_{1},...,
\, Y_{t}\}$ which form a basis of $A+B$. Again by the fact that
$\mathfrak{g}$ is not abelian, this basis of $A+B$ will contain
either three vectors from $\mathcal{B}_{A}$ and two vectors from
$\mathcal{B}_{B}$ or four vectors from $\mathcal{B}_{A}$ and one
vector from $\mathcal{B}_{B}$. Therefore, we are left to prove
that $\mathfrak{g}$ can not be written as the sum of two abelian
subalgebras of dimensions $3$ and $2$, respectively $4$ and $1$.
Suppose first that $\mathfrak{g} = A + B$, where $A$ and $B$ are
abelian subalgebras such that ${\rm dim_{k}} (A) = 3$ and ${\rm
dim_{k}} (B) = 2$. Consider $\mathcal{B}_{A} = \{X_{1} =
\sum_{i=1}^{5} \alpha_{i}\, e_{i},\, X_{2} = \sum_{i=1}^{5}
\beta_{i}\, e_{i}, \, X_{3} = \sum_{i=1}^{5} \gamma_{i}\, e_{i}\}$
and $\mathcal{B}_{B} =\{Y_{1} = \sum_{i=1}^{5} \delta_{i}\, e_{i},
\, Y_{2} = \sum_{i=1}^{5} \mu_{i} \, e_{i}\}$ $k$-bases of $A$,
respectively $B$, and let $V$ be the matrix of coefficients
\begin{equation}\eqlabel{matrV}
V = \begin{pmatrix} \alpha_{1} & \alpha_{2} & \alpha_{3} & \alpha_{4} & \alpha_{5} \\
\beta_{1} & \beta_{2} & \beta_{3} & \beta_{4} & \beta_{5}\\
\gamma_{1} & \gamma_{2} & \gamma_{3} & \gamma_{4} & \gamma_{5}\\
\delta_{1} & \delta_{2} & \delta_{3} & \delta_{4} & \delta_{5}\\
\mu_{1} & \mu_{2} & \mu_{3} & \mu_{4} & \mu_{5}
\end{pmatrix}.
\end{equation}
Let $W := V^{1, 2, 3}_{1, 2, 3}$ (resp. $T := V^{1, 2, 3}_{4, 5}$)
be the matrix formed with the first three columns and the first
three rows of $V$ (resp. the first three columns and the last two
rows of $V$). By applying \equref{aaa1} to the vectors in
$\mathcal{B}_{A}$, respectively $\mathcal{B}_{B}$, it follows that
${\rm rank}\, (W) = 1$ and ${\rm rank} \, (T) \leq 1$. We obtain
that ${\rm rank}\, (V)  \leq 4$, since ${\rm det}\, (V) = 0$. The
last assertion follows for instance by first expanding the
determinant of $V$ along the last column and then by expanding
again the resulting determinants along the last column. Thus
$\mathfrak{g}$ can not be written as a sum of abelian subalgebras
of dimensions $3$ and $2$. Finally, the second case will be
settled in the negative by proving that $\mathfrak{g}$ does not
have abelian subalgebras of dimension $4$. Suppose $A$ is such a
subalgebra with basis $\mathcal{B}_{A} = \{X_{1} = \sum_{i=1}^{5}
\alpha_{i}\, e_{i},\, X_{2} = \sum_{i=1}^{5} \beta_{i}\, e_{i}, \,
X_{3} = \sum_{i=1}^{5} \gamma_{i}\, e_{i}, \, X_{4} =
\sum_{i=1}^{5} \delta_{i}\, e_{i}\}$ and denote by $V'$ the matrix
consisting of the first four rows of $V$. We will reach a
contradiction by proving that ${\rm rank}\, (V') < 4$. By applying
\equref{aaa1} to the vectors in $\mathcal{B}_{A}$ we obtain that
${\rm rank}\, (V^{1, 2, 3}) = 1$, where $V^{1, 2, 3}$ is the
matrix consisting of the first three columns of $V'$. Hence we
obtain that ${\rm rank}\, (V') \leq 3$. This finishes the proof.
\end{example}

\section{On the structure and the classification of metabelian
Leibniz algebras}\selabel{class}

In this section a structure type theorem for metabelian Leibniz
algebras is proposed and, as an application, we give the
classification of all Leibniz algebras for which the derived
algebra has dimension $1$. First we will show that the classical
definition of a metabelian Leibniz algebra is equivalent to the
one recently introduced in \cite[Definition 2.8]{Mi2013}.

\begin{proposition}\prlabel{refordef}
A Leibniz algebra $\mathfrak{g}$ is metabelian if and only if
$\mathfrak{g}$ is an extension of an abelian Leibniz algebra by an
abelian Leibniz algebra, i.e. there exist two vector spaces $P$
and $V$ and an exact sequence of Leibniz algebra maps:
\begin{eqnarray} \eqlabel{primulsir}
\xymatrix{ 0 \ar[r] & V_0 \ar[r]^{i} & {\mathfrak{g}} \ar[r]^{\pi}
& P_0 \ar[r] & 0 }.
\end{eqnarray}
\end{proposition}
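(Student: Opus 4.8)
The plan is to prove both implications using the canonical extension determined by the derived subalgebra. For the direct implication, assume $\mathfrak{g}$ is metabelian and put $V := \mathfrak{g}' = [\mathfrak{g}, \mathfrak{g}]$ and $P := \mathfrak{g}/\mathfrak{g}'$, viewed as vector spaces. First I would note that $\mathfrak{g}'$ is a two-sided ideal of $\mathfrak{g}$, since any $[h, g]$ and $[g, h]$ with $h \in \mathfrak{g}'$ is itself a bracket and hence lies in $\mathfrak{g}'$; consequently the quotient $\mathfrak{g}/\mathfrak{g}'$ carries a Leibniz algebra structure. By the very definition of $\mathfrak{g}'$, every bracket of $\mathfrak{g}$ maps into $\mathfrak{g}'$, so the induced bracket on $\mathfrak{g}/\mathfrak{g}'$ vanishes and $\mathfrak{g}/\mathfrak{g}' = P_0$ is abelian. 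The metabelian hypothesis $[\mathfrak{g}', \mathfrak{g}'] = 0$ says exactly that $\mathfrak{g}'$, with its inherited bracket, is abelian, i.e. $\mathfrak{g}' = V_0$. Taking $i$ to be the inclusion $\mathfrak{g}' \hookrightarrow \mathfrak{g}$ and $\pi$ the canonical projection then yields the exact sequence $\equref{primulsir}$.

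For the converse, suppose we are given an exact sequence $0 \to V_0 \xrightarrow{i} \mathfrak{g} \xrightarrow{\pi} P_0 \to 0$ of Leibniz algebra maps with $V_0$ and $P_0$ abelian. The key observation is that the derived subalgebra is trapped inside the image of $i$. Indeed, since $\pi$ is a morphism of Leibniz algebras and $P_0$ has trivial bracket, for all $x$, $y \in \mathfrak{g}$ we get $\pi([x, y]) = [\pi(x), \pi(y)] = 0$, hence $[x, y] \in \Ker\pi = \im i$ by exactness; therefore $\mathfrak{g}' \subseteq \im i$. On the other hand, $i$ is a Leibniz algebra map whose source $V_0$ is abelian, so its image is an abelian subalgebra: $[i(v), i(w)] = i([v, w]) = 0$ for all $v$, $w \in V_0$. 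Combining the two facts gives $[\mathfrak{g}', \mathfrak{g}'] \subseteq [\im i, \im i] = 0$, so $\mathfrak{g}$ is metabelian.

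Neither implication poses a genuine obstacle; the argument is essentially bookkeeping with the derived subalgebra. The only points demanding care are routine structural checks: in the direct implication one must confirm that $\mathfrak{g}'$ is a two-sided ideal so that $\mathfrak{g}/\mathfrak{g}'$ is a genuine Leibniz algebra, and in the converse one must use exactness in the sharp form $\im i = \Ker\pi$ rather than merely $\pi \circ i = 0$. Both are immediate from the definitions recalled in the preamble.
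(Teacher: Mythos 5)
Your proof is correct and follows essentially the same route as the paper: the forward direction uses the canonical exact sequence $0 \to \mathfrak{g}'_0 \to \mathfrak{g} \to (\mathfrak{g}/\mathfrak{g}')_0 \to 0$, and the converse combines $\mathfrak{g}' \subseteq \Ker\pi = \im i$ with the abelianness of $\im i$. Your write-up is somewhat more detailed (e.g.\ verifying that $\mathfrak{g}'$ is a two-sided ideal), but the underlying argument is identical.
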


\begin{proof}
Any metabelian Leibniz algebra $\mathfrak{g}$ is an extension of
the abelian Leibniz algebra $(\mathfrak{g}/\mathfrak{g}')_0$ by
the abelian Leibniz algebra $\mathfrak{g}' = \mathfrak{g}'_0 $
since we have the following exact sequence of Leibniz algebra maps
\begin{eqnarray} \eqlabel{extenho2b}
\xymatrix{ 0 \ar[r] & \mathfrak{g}'_0 \ar[r]^{\iota} &
\mathfrak{g} \ar[r]^{p} & (\mathfrak{g}/\mathfrak{g}')_0 \ar[r] &
0 }
\end{eqnarray}
where $\iota$ is the inclusion map and $p$ the canonical
projection. Conversely, assume that we have an exact sequence
\equref{primulsir} of Leibniz algebra maps. By identifying $V_0
\cong i(V_0) \leq \mathfrak{g}$, we will assume that $V$ is a
subspace of $\mathfrak{g}$ and $i$ is just the inclusion map. The
sequence \equref{primulsir} being exact gives $[v, \, w] = 0$, for
all $v$, $w \in V$ and $[x, \, y] \in {\rm Ker} (\pi) = V$, for
all $x$, $y \in \mathfrak{g}$. Hence, we obtain $[\,[x, \, y ], \,
[z, \, t]\,] = 0$, for all $x$, $y$, $z$, $t\in \mathfrak{g}$,
i.e. $\mathfrak{g}$ is metabelian.
\end{proof}

Based on \prref{refordef}, the structure of metabelian Leibniz
algebras follows as a special case of \cite[Theorem 2.1 and
Corollary 2.10]{Mi2013}. However, for the reader's convenience we
will write down this result as it will be used in the proof of
\thref{clasific1}.

\begin{theorem}\thlabel{structura}
A Leibniz algebra $\mathfrak{g}$ is metabelian if and only if
there exists an isomorphism of Leibniz algebras $\mathfrak{g}
\cong V \, \#_{(\triangleleft, \, \triangleright, \, f)} \, P $,
for some vector spaces $P$ and $V$ connected by three bilinear
maps $\triangleleft: V \times P \to V$, $\triangleright : P \times
V \to V$ and $f: P \times P \to V$ satisfying the following
compatibilities for all $p$, $q$, $r\in P$ and $x \in V$:
\begin{eqnarray}
&& (x \triangleleft p) \triangleleft q = (x \triangleleft q)
\triangleleft p, \quad  p \triangleright (x \triangleleft q) = (p
\triangleright x) \triangleleft q = - \, p \triangleright (q \triangleright x) \eqlabel{discret1} \\
&& p \triangleright f(q, \, r) - f(p, \, q) \triangleleft r + f(p,
\, r) \triangleleft q = 0. \eqlabel{discret2}
\end{eqnarray}
The Leibniz algebra $V \, \#_{(\triangleleft, \, \triangleright,
\, f)} \, P$ is the vector space $V \times P$ with the bracket
given by:
\begin{eqnarray}
\{ (x, \, p), \, (y, \, q) \} := \bigl( x \triangleleft q + p
\triangleright y + f(p, \, q), \,\,  0 \bigl) \eqlabel{cotg2}
\end{eqnarray}
for all $x$, $y\in V$ and $p$, $q\in P$.
\end{theorem}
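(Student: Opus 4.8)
The plan is to handle both implications of \thref{structura} through a single coordinate computation, namely the expansion of the Leibniz identity \equref{Lbz1} for the candidate bracket \equref{cotg2}. First I would dispose of the ``only if'' direction by setting up coordinates. Starting from a metabelian $\mathfrak{g}$, \prref{refordef} supplies an exact sequence $0 \to V_0 \xrightarrow{i} \mathfrak{g} \xrightarrow{\pi} P_0 \to 0$. Since $\pi$ is a Leibniz algebra map onto the abelian algebra $P_0$, every bracket in $\mathfrak{g}$ lands in $\Ker \pi = i(V)$, and $i(V)$ is abelian because $i$ is a Leibniz map out of $V_0$. Choosing a linear section $s \colon P \to \mathfrak{g}$ of $\pi$ then gives a linear isomorphism $V \times P \to \mathfrak{g}$, $(x,\,p) \mapsto i(x) + s(p)$.

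Next I would transport the bracket along this isomorphism. Expanding $[\,i(x)+s(p),\, i(y)+s(q)\,]$ into its four summands, the term $[i(x),\,i(y)]$ vanishes since $i(V)$ is abelian, and the remaining three summands all lie in $i(V)$. Setting $x \triangleleft q := [i(x),\,s(q)]$, $p \triangleright y := [s(p),\,i(y)]$ and $f(p,\,q) := [s(p),\,s(q)]$ (read back in $V$) reproduces exactly the bracket \equref{cotg2}, so the transported structure has the claimed form.

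The heart of the argument is to show that, for the bracket \equref{cotg2}, the Leibniz identity \equref{Lbz1} is equivalent to \equref{discret1}--\equref{discret2}. I would evaluate \equref{Lbz1} on generic elements $(x_1,\,p)$, $(y_1,\,q)$, $(z_1,\,r)$: because the output of \equref{cotg2} always has second coordinate $0$ and each of $\triangleleft,\triangleright,f$ is bilinear (hence kills a zero argument), both sides collapse to single expressions in $V$. Grouping the surviving terms by whether they carry $x_1$, $y_1$, $z_1$, or only $f$, and using that $x_1,y_1,z_1$ range independently over $V$ while $p,q,r$ range over $P$, the identity separates into four independent pieces: the $f$-only part is precisely \equref{discret2}; the $x_1$-part yields $(x \triangleleft q)\triangleleft r = (x \triangleleft r)\triangleleft q$; the $y_1$-part yields $p \triangleright (x \triangleleft q) = (p \triangleright x)\triangleleft q$; and the $z_1$-part yields $(p \triangleright x)\triangleleft q = -\,p \triangleright (q \triangleright x)$, which together constitute \equref{discret1}. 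Read forward this produces the compatibilities for the ``only if'' direction; read in reverse it shows that whenever \equref{discret1}--\equref{discret2} hold the bracket \equref{cotg2} satisfies \equref{Lbz1}, so $V \,\#_{(\triangleleft,\,\triangleright,\,f)}\, P$ is genuinely a Leibniz algebra. Metabelianity for the ``if'' direction is then immediate: $V \times \{0\}$ is an abelian two-sided ideal containing the derived algebra, so \prref{refordef} applied to the obvious sequence $0 \to V_0 \to V \,\#\, P \to P_0 \to 0$ finishes it.

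The one delicate point I anticipate is the bookkeeping in the separation step: after discarding the $\triangleleft 0$, $0 \triangleright (-)$ and $f(\cdot,0)$, $f(0,\cdot)$ contributions by bilinearity, one must verify that the remaining terms really do sort cleanly according to the independent arguments $x_1,y_1,z_1 \in V$, so that matching them coefficient-by-coefficient is legitimate. Beyond this careful term-tracking I expect no conceptual obstacle, the equivalence being a mechanical, if lengthy, expansion.
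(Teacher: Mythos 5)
Your proof is correct, and it is genuinely more self-contained than the paper's: the paper gives no computation at all for \thref{structura}, but simply derives it, via \prref{refordef}, as a special case of the ``global extension problem'' machinery of \cite[Theorem 2.1 and Corollary 2.10]{Mi2013}. Your route is the direct specialization of that machinery carried out by hand: pick a linear section $s$ of $\pi$ in the exact sequence provided by \prref{refordef}, transport the bracket along $(x,p)\mapsto i(x)+s(p)$ (the summand $[i(x),i(y)]$ vanishes since $i(V_0)$ is abelian, the other three summands lie in $\Ker\pi = i(V)$, so the transported bracket has exactly the form \equref{cotg2} with $x\triangleleft q:=[i(x),s(q)]$, $p\triangleright y:=[s(p),i(y)]$, $f(p,q):=[s(p),s(q)]$ read back in $V$), and then prove that for a bracket of the shape \equref{cotg2} the Leibniz identity is \emph{equivalent} to \equref{discret1}--\equref{discret2}. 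Your anticipated ``delicate point'' does work out: expanding \equref{Lbz1} on $(x_1,p)$, $(y_1,q)$, $(z_1,r)$, all second coordinates are $0$, and in the first coordinate the $x_1$-terms yield $(x_1\triangleleft q)\triangleleft r=(x_1\triangleleft r)\triangleleft q$, the $y_1$-terms yield $p\triangleright(y_1\triangleleft r)=(p\triangleright y_1)\triangleleft r$, the $z_1$-terms yield $(p\triangleright z_1)\triangleleft q=-\,p\triangleright(q\triangleright z_1)$, and the terms involving only $f$ yield \equref{discret2}; since every term is linear in whichever of $x_1$, $y_1$, $z_1$ it contains (and the $f$-terms contain none of them), setting these arguments to zero one at a time legitimately separates the four identities, and conversely their sum restores \equref{Lbz1}. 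The ``if'' direction is also fine: $V\times\{0\}$ is an abelian two-sided ideal containing all brackets, so metabelianity follows directly or through \prref{refordef}. In short, your argument buys a complete proof readable inside the paper at the cost of the explicit expansion, while the paper's citation buys brevity and the connection to the general unified-product framework of \cite{Mi2013}, where the quotient need not be abelian and additional structure maps and axioms appear; both are valid, and your proposal has no gap.
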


A system $(P, \, V, \, \triangleleft, \, \triangleright, \, f)$
satisfying the axioms \equref{discret1}-\equref{discret2} of
\thref{structura} will be called a \emph{Leibniz metabelian datum}
of $P$ by $V$ and we denote by ${\rm Met} \, (P, \, V)$ the set of
all metabelian datums $(P, \, V, \, \triangleleft, \,
\triangleright, \, f)$ of $P$ by $V$. The Leibniz algebra $V \,
\#_{(\triangleleft, \, \triangleright, \, f)} \, P$ is called the
\emph{metabelian product} associated to $(P, \, V, \,
\triangleleft, \, \triangleright, \, f)$.

For the sake of completeness we present the Lie algebra version of
\thref{structura} as it takes a simplified form - we use the
notations and terminology from \cite[Section 3]{am-2013a}.

\begin{corollary} \colabel{liemetabstr}
A Lie algebra $\mathfrak{g}$ is metabelian if and only if there
exists an isomorphism of Lie algebras $\mathfrak{g} \cong V \,
\natural_{(\triangleright, \, f)} \, P $, for some vector spaces $P$
and $V$ connected by two bilinear maps $\triangleright : P \times
V \to V$ and $f: P \times P \to V$ satisfying the following
compatibilities for any $p$, $q$, $r \in P$ and $x\in V$:
\begin{eqnarray}
p \triangleright (q \triangleright x) =  q\triangleright (p
\triangleright x), \quad f(p, p) = 0, \quad p\triangleright f(q,
r) + q\triangleright f(r, p) + r \triangleright f(p, q) = 0
\eqlabel{met2}
\end{eqnarray}
The Lie algebra $V \, \natural_{(\triangleright, \, f)} \, P$ is the
vector space $V \times P$ with the bracket defined by
\begin{eqnarray}
\left[ (x, \, p), \, (y, \, q) \right] := \bigl( p \triangleright
y - q \triangleright x + f(p, \, q), \,\,  0 \bigl)
\eqlabel{cotg2}
\end{eqnarray}
for all $x$, $y\in V$ and $p$, $q\in P$.
\end{corollary}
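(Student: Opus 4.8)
The plan is to obtain this as a direct specialization of \thref{structura}, the extra input being the single identity that singles out Lie algebras among Leibniz algebras: a Leibniz algebra is a Lie algebra if and only if $[u, \, u] = 0$ for all its elements $u$. Since a metabelian Lie algebra $\mathfrak{g}$ is in particular a metabelian Leibniz algebra, \thref{structura} provides an isomorphism of Leibniz algebras $\mathfrak{g} \cong V \, \#_{(\triangleleft, \, \triangleright, \, f)} \, P$ for some Leibniz metabelian datum $(\triangleleft, \, \triangleright, \, f)$; as $\mathfrak{g}$ is a Lie algebra, so is the metabelian product on the right. The crucial step is to read off what $[u, \, u] = 0$ imposes on the datum. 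Evaluating the bracket on a diagonal element gives $\{ (x, \, p), \, (x, \, p) \} = \bigl( x \triangleleft p + p \triangleright x + f(p, \, p), \, 0 \bigr)$, so the Lie condition amounts to $x \triangleleft p + p \triangleright x + f(p, \, p) = 0$ for all $x \in V$ and $p \in P$. Putting $x = 0$ yields $f(p, \, p) = 0$, and the remaining identity forces $x \triangleleft p = - \, p \triangleright x$. Hence, in the Lie case the action $\triangleleft$ is completely determined by $\triangleright$, which is precisely why a single action survives in the statement.

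For the forward implication I would substitute $x \triangleleft q = - \, q \triangleright x$ into the bracket of the metabelian product, turning $\bigl( x \triangleleft q + p \triangleright y + f(p, \, q), \, 0 \bigr)$ into $\bigl( p \triangleright y - q \triangleright x + f(p, \, q), \, 0 \bigr)$, which is exactly the bracket defining $V \, \natural_{(\triangleright, \, f)} \, P$. It then remains to verify that the axioms \equref{discret1}--\equref{discret2} of the datum collapse to \equref{met2}. After the substitution, a direct rewriting shows that the first identity of \equref{discret1} becomes $p \triangleright (q \triangleright x) = q \triangleright (p \triangleright x)$, and that the chain of equalities forming the second part of \equref{discret1} reduces to the very same symmetry, producing no further constraint. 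For \equref{discret2}, I would first use $f(p, \, p) = 0$ and bilinearity to get the skew-symmetry $f(p, \, q) = - \, f(q, \, p)$, and then replace $f(p, \, q) \triangleleft r$ by $- \, r \triangleright f(p, \, q)$ and $f(p, \, r) \triangleleft q$ by $- \, q \triangleright f(p, \, r) = q \triangleright f(r, \, p)$; feeding these into \equref{discret2} gives exactly the cyclic identity $p \triangleright f(q, \, r) + q \triangleright f(r, \, p) + r \triangleright f(p, \, q) = 0$ of \equref{met2}.

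For the converse I would run this in reverse. Starting from $(\triangleright, \, f)$ satisfying \equref{met2}, define $\triangleleft$ by $x \triangleleft p := - \, p \triangleright x$; then the three conditions of \equref{met2} (again using the skew-symmetry of $f$ coming from $f(p, \, p) = 0$) imply \equref{discret1}--\equref{discret2}, so by \thref{structura} the associated metabelian product $V \, \#_{(\triangleleft, \, \triangleright, \, f)} \, P$ is a metabelian Leibniz algebra, and by construction its bracket coincides with the bracket of $V \, \natural_{(\triangleright, \, f)} \, P$. Finally $\{ (x, \, p), \, (x, \, p) \} = \bigl( x \triangleleft p + p \triangleright x + f(p, \, p), \, 0 \bigr) = 0$ shows this Leibniz algebra is in fact a (metabelian) Lie algebra, which closes the equivalence.

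All the computations are mechanical once the correspondence $x \triangleleft p = - \, p \triangleright x$ and the skew-symmetry of $f$ are in place; the only point demanding care is the sign bookkeeping in reducing \equref{discret2}, where one must recognise the term $f(p, \, r) \triangleleft q$ as $q \triangleright f(r, \, p)$ so as to reproduce the cyclic term of \equref{met2} rather than its negative. Beyond this I anticipate no real obstacle.
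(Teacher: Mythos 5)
Your proposal is correct and follows exactly the route the paper intends: \coref{liemetabstr} is stated as the Lie-algebra specialization of \thref{structura}, and you supply precisely that specialization, with the key observations being that the alternating condition $\{(x,p),(x,p)\}=0$ forces $f(p,p)=0$ and $x\triangleleft p = -\,p\triangleright x$, after which \equref{discret1}--\equref{discret2} collapse to \equref{met2} (including the correct use of the skew-symmetry of $f$ derived from $f(p,p)=0$, valid in any characteristic). The sign bookkeeping you flag is handled correctly, so nothing is missing.
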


Let $P \neq 0$ be a vector space, $P^* = {\rm Hom}_k (P, \, k)$
the dual of $P$ and ${\rm Bil} \, (P \times P, \, k) \cong (P
\ot_k P)^* $ the space of all bilinear forms on $P$. On the direct
product of vector spaces $k\times P$ we construct three families
of Leibniz algebras as follows: for any non-trivial map $\lambda
\in P^* \setminus \{0\}$, we denote by $P^{\lambda}$ the non-Lie
Leibniz algebra having the bracket defined for any $a$, $b\in k$
and $p$, $q\in P$ by:
\begin{equation} \eqlabel{primafam}
\{ (a, \, p),  \, (b, \, q) \} := (a \lambda (q), \, 0).
\end{equation}
For any non-trivial bilinear form $f \in {\rm Bil} \, (P \times P,
\, k) \setminus \{0\}$, we denote by $P(f)$ the Leibniz algebra
with the bracket defined for any $a$, $b\in k$ and $p$, $q\in P$
by:
\begin{equation} \eqlabel{adouafam}
\{ (a, \, p),  \, (b, \, q) \} := (f(p, \, q), \, 0).
\end{equation}
We note that $P(f)$ is a Lie algebra if and only if
$f (p, p) = 0$, for all $p\in P$. Finally, for a pair $(\Lambda, \, f) \in (P^* \setminus \{0\}) \,
\times \, {\rm Bil} \, (P \times P, \, k)$ satisfying the following
compatibility condition for any $p$, $q$, $r\in P$
\begin{equation} \eqlabel{comp2}
\Lambda (p) \, f (q, \, r) + \Lambda (r) \, f (p, \, q) - \Lambda
(q) \, f (p, \, r) = 0
\end{equation}
we denote by $P_{(\Lambda, \, f)}$ the Lie algebra with the
bracket given by:
\begin{equation} \eqlabel{atreiaclas}
\{ (a, \, p),  \, (b, \, q) \} := ( - a \, \Lambda(q) + b \,
\Lambda(p) + f(p, \, q), \, 0)
\end{equation}
for all $a$, $b\in k$ and $p$, $q\in P$. $P_{(\Lambda, \, f)}$ is
indeed a Lie algebra since applying \equref{comp2} for $r = q$ and
taking into account that $\Lambda \in P^* \setminus \{0\}$ we
obtain $f(q, \, q) = 0$, for all $q\in P$. Thus, $\{ (a, \, p), \,
(a, \, p) \} = 0$, for all $a\in k$ and $p\in P$. These three
families of Leibniz algebras are all we need in order to provide
the complete classification of all metabelian Leibniz algebras
having the derived subalgebra of dimension $1$.

\begin{theorem} \thlabel{clasific1}
A Leibniz algebra $\mathfrak{g}$ has the derived
algebra $\mathfrak{g}'$ of dimension $1$ if and only if
$\mathfrak{g}$ is isomorphic to one of the Leibniz algebras
$P^{\lambda}$, $P (f)$ or $P_{(\Lambda, \, f)}$ from the three
families defined above, where $P \neq 0$ is a vector space,
$\lambda \in P^* \setminus \{0\}$, $f \in {\rm Bil} \, (P \times
P, \, k) \setminus \{0\}$ and $(\Lambda, \, f) \in (P^* \setminus
\{0\}) \times {\rm Bil} \, (P \times P, \, k)$ satisfying
\equref{comp2}. Furthermore, we have:

$(1)$ None of the Leibniz algebras from one family is isomorphic
to a Leibniz algebra pertaining to another family.

$(2)$ There exists an isomorphism of Leibniz algebras $P^{\lambda}
\cong P^{\lambda'}$, for all  $\lambda$, $\lambda' \in P^*
\setminus \{0\}$.

$(3)$ Two Leibniz algebras $P (f)$ and  $P (f')$ are isomorphic if
and only if there exists a pair $(u, \, \psi) \in k^* \times {\rm
Aut}_k (P)$ such that $u \, f(p, \, q) = f' (\psi(p), \, \psi
(q))$, for all $p$, $q\in P$.

$(4)$ Two Lie algebras $P_{(\Lambda, \, f)}$ and  $P_{(\Lambda',
\, f')}$ are isomorphic if and only if there exists a triple  $(v,
\, u, \, \psi) \in P^* \times k^* \times {\rm Aut}_k (P)$ such
that for any $p$, $q\in P$:
\begin{equation} \eqlabel{echivciud}
\Lambda = \Lambda' \circ \psi, \quad u \, f(p, \, q) = f' \bigl(
\psi(p), \, \psi (q) \bigl) \, + \, (\Lambda' \circ \psi ) (v(q) p - v(p) q)
\end{equation}
\end{theorem}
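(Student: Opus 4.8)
The plan is to reduce everything to \thref{structura} and then run a short case analysis. First I would note that the hypothesis $\dim_k \mathfrak{g}' = 1$ already forces $\mathfrak{g}$ to be metabelian: putting $z = y$ in the Leibniz identity \equref{Lbz1} gives $[x, [y,y]] = 0$ for all $x$, $y \in \mathfrak{g}$. If some square $[y_0, y_0]$ is nonzero it spans the one-dimensional $\mathfrak{g}'$, so applying the identity with $x = [y_0,y_0]$ yields $[\mathfrak{g}', \mathfrak{g}'] = 0$; if instead every square vanishes then $\mathfrak{g}$ is a Lie algebra and $[\mathfrak{g}', \mathfrak{g}'] = 0$ trivially. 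In either case $\mathfrak{g}$ is metabelian, so \thref{structura} applies, and using the canonical datum from the proof of \prref{refordef} I may take $V = \mathfrak{g}'$, so that $\dim_k V = 1$ and $P = \mathfrak{g}/\mathfrak{g}'$. Fixing a basis of $V \cong k$, the maps $\triangleleft$ and $\triangleright$ become functionals, $x \triangleleft q = x\,\alpha(q)$ and $p \triangleright y = y\,\beta(p)$ with $\alpha$, $\beta \in P^*$, while $f$ is an ordinary bilinear form. A direct substitution shows that \equref{discret1}--\equref{discret2} collapse to the two scalar conditions $\beta(p)\bigl(\alpha(q) + \beta(q)\bigr) = 0$ and $\beta(p) f(q,r) - \alpha(r) f(p,q) + \alpha(q) f(p,r) = 0$, valid for all $p$, $q$, $r \in P$, and the bracket \equref{cotg2} becomes $\{(a,p),(b,q)\} = \bigl(a\,\alpha(q) + b\,\beta(p) + f(p,q),\, 0\bigr)$.

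Next I would split on the first scalar condition. If $\beta \neq 0$ it forces $\alpha = -\beta$; writing $\Lambda := \beta$, the second condition becomes precisely the cocycle identity \equref{comp2} and the bracket is exactly that of $P_{(\Lambda,\, f)}$. If $\beta = 0$, the second condition reads $\alpha(q) f(p,r) = \alpha(r) f(p,q)$. When $\alpha = 0$ this is vacuous and $\mathfrak{g} \cong P(f)$, with $f \neq 0$ since otherwise $\mathfrak{g}$ would be abelian. When $\alpha \neq 0$ it forces $f$ to have rank one, $f(p,q) = g(p)\,\alpha(q)$ for some $g \in P^*$ (evaluate at a $q$ with $\alpha(q) = 1$). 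The crucial observation is that such an $f$ is a coboundary: the linear automorphism $(a, p) \mapsto \bigl(a + g(p),\, p\bigr)$ is a Leibniz isomorphism onto the algebra with the same $\alpha$, $\beta$ but with $f = 0$, that is, onto $P^{\alpha}$. This is the step most easily overlooked and is what prevents a spurious fourth family from appearing. Since $(\alpha, \beta, f) = (0,0,0)$ would make $\mathfrak{g}$ abelian, these cases are exhaustive; together with the immediate check that each $P^{\lambda}$, $P(f)$ and $P_{(\Lambda,\, f)}$ has one-dimensional derived algebra, this proves the main equivalence.

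For the classification statements $(1)$--$(4)$ I would first record that any Leibniz isomorphism between two algebras from our list carries derived algebra to derived algebra, hence preserves $k \times \{0\}$ and therefore has the triangular shape $\Phi(a, p) = \bigl(u\,a + v(p),\, \psi(p)\bigr)$ with $(v, u, \psi) \in P^* \times k^* \times {\rm Aut}_k(P)$, an element of the semidirect product $P^* \ltimes \bigl(k^* \times {\rm Aut}_k(P)\bigr)$. Imposing that $\Phi$ respect the bracket above yields the master equations $\alpha = \alpha' \circ \psi$, $\beta = \beta' \circ \psi$ and $f'\bigl(\psi(p), \psi(q)\bigr) = u\, f(p,q) - v(p)\,\alpha(q) - v(q)\,\beta(p)$. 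The remaining claims are read off from these. Since $\psi$ is invertible, whether $\alpha$ vanishes and whether $\beta$ vanishes are isomorphism invariants, and these vanishing patterns separate the three families, giving $(1)$; for the first family ($\beta = 0$, $f = 0$) the equations force $v = 0$ and $\alpha = \alpha' \circ \psi$, and since any two nonzero functionals are conjugate under ${\rm Aut}_k(P)$ we obtain $(2)$; for the second family ($\alpha = \beta = 0$) the last equation reduces to $f'\bigl(\psi(p), \psi(q)\bigr) = u\, f(p,q)$, giving $(3)$; and for the third family, substituting $\alpha = -\Lambda$, $\beta = \Lambda$ and using $\Lambda = \Lambda' \circ \psi$ rewrites the last equation as \equref{echivciud}, giving $(4)$.

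The main obstacle is precisely the mixed subcase $\beta = 0$, $\alpha \neq 0$: recognizing that the seemingly new algebra is isomorphic to $P^{\lambda}$ through a coboundary change of section is the crux of the structural half, while the four isomorphism statements then follow routinely by specializing the master equations.
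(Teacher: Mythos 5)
Your proposal is correct and takes essentially the same route as the paper: reduce via \thref{structura} to a triple $(\lambda,\Lambda,f)=(\alpha,\beta,f)$ of structure maps subject to the two scalar compatibilities, establish that every isomorphism has the triangular form $(a,p)\mapsto (ua+v(p),\psi(p))$ and extract the master equations, then classify by the vanishing pattern of $(\alpha,\beta)$, with the rank-one/coboundary trick disposing of the mixed case $\beta=0$, $\alpha\neq 0$ exactly as the paper does via $P^{(\lambda,\theta)}\cong P^{(\lambda,0)}$. The only minor difference is that you deduce the triangular shape from invariance of the derived subalgebra $k\times\{0\}$ under isomorphisms, whereas the paper proves a slightly stronger claim for arbitrary morphisms by showing the morphism condition forces the $P$-component $p_0$ of $\varphi(1,0)$ to vanish; both arguments are valid for the theorem at hand.
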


\begin{proof}
Using \prref{refordef} and \thref{structura}, we obtain that a
Leibniz algebra $\mathfrak{g}$ has the derived algebra
$\mathfrak{g}'$ of dimension $1$ (such a Leibniz algebra
$\mathfrak{g}$ is automatically non-abelian and metabelian) if and
only if there exists an isomorphism of Leibniz algebras
$\mathfrak{g} \cong k \, \#_{(\triangleleft, \, \triangleright, \,
f)} \, P $, for some vector space $P\neq 0$. Here we identify
$\mathfrak{g}' \cong k$ and $P$ will be chosen to be the quotient
vector space $\mathfrak{g}/\mathfrak{g}'$. Thus, we have to
compute the set ${\rm Met} \, (P, \, k)$ of all Leibniz metabelian
datums $(P, \, k, \, \triangleleft, \, \triangleright, \, f)$ of
$P$ by $k$ and then we have to describe and classify the
associated metabelian products $k \, \#_{(\triangleleft, \,
\triangleright, \, f)} \, P$. It is straightforward to show that
${\rm Met} \, (P, \, k)$ is in one-to-one correspondence with the
set of all triples $(\lambda, \, \Lambda, \, f) \in P^* \times P^*
\times {\rm Bil} \, (P \times P, \, k)$ satisfying the following
compatibilities for all $p$, $q$, $r\in P$:
\begin{equation} \eqlabel{echivciudgen}
\Lambda (p) \Lambda(q) = - \Lambda(p) \lambda (q), \qquad \Lambda
(p) \, f (q, \, r) - \lambda (r) \, f (p, \, q) + \lambda (q) \, f
(p, \, r) = 0.
\end{equation}
Through the above bijection, the metabelian datum $(P, \, k, \,
\triangleleft, \, \triangleright, \, f)$ corresponding to
$(\lambda, \, \Lambda, \, f) \in P^* \times P^* \times {\rm Bil}
\, (P \times P, \, k)$ is given for any $a\in k$ and $p\in P$ by:
\begin{equation} \eqlabel{transl}
a\triangleleft p := a\, \lambda (p), \quad p\triangleright a := a
\, \Lambda(p).
\end{equation}
The compatibility conditions \equref{echivciudgen} are exactly
those obtained by writing \equref{discret1}-\equref{discret2} via
the formulas \equref{transl}. We denote by $P_{(\lambda, \,
\Lambda, \, f)}$ the metabelian product $k \, \# \, P$ associated
to a triple $(\lambda, \, \Lambda, \, f) \in P^* \times P^* \times
{\rm Bil} \, (P \times P, \, k)$ satisfying \equref{echivciudgen}.
That is $P_{(\lambda, \, \Lambda, \, f)}$ is the vector space
$k\times P$ with the bracket given for any $a$, $b\in k$ and $p$,
$q\in P$ by:
\begin{equation} \eqlabel{general3}
\{ (a, \, p),  \, (b, \, q) \} := ( a \, \lambda(q) + b \,
\Lambda(p) + f(p, \, q), \, 0).
\end{equation}
If all the maps in the triple $(\lambda, \, \Lambda, \, f)$ are
trivial, then \equref{general3} shows that $P_{(\lambda, \,
\Lambda, \, f)}$ is abelian. Hence, we can consider only Leibniz
algebras $P_{(\lambda, \, \Lambda, \, f)}$ for which $(\lambda, \,
\Lambda, \, f) \neq (0, \, 0, \, 0)$. The next step will provide
necessary and sufficient conditions for two Leibniz algebras
$P_{(\lambda, \, \Lambda, \, f)}$ and $P_{(\lambda', \, \Lambda',
\, f')}$ to be isomorphic. We will prove a more general result
which will allow us to also compute the automorphisms group of
$P_{(\lambda, \, \Lambda, \, f)}$. More precisely, we shall prove
the following technical statement:

\emph{\textbf{Claim:}} Let $(\lambda, \, \Lambda, \, f) \neq (0,
\, 0, \, 0)$ and $(\lambda', \, \Lambda', \, f')$ be two triples
in $P^* \times P^* \times {\rm Bil} \, (P \times P, \, k)$
satisfying \equref{echivciudgen}. Then there exists a bijection
between the set of all morphisms of Leibniz algebras $\varphi:
P_{(\lambda, \, \Lambda, \, f)}  \to P_{(\lambda', \, \Lambda', \,
f')}$ and the set of all triples $(v, u, \psi) \in P^* \times k
\times {\rm End}_k(P)$ satisfying the following three
compatibilities for all $p$, $q\in P$:
\begin{eqnarray}
u \, \lambda &=& u \, \lambda' \circ \psi , \qquad u \,
\Lambda = u \, \Lambda' \circ \psi  \eqlabel{morf1} \\
u \, f(p, \, q) &=& f' \bigl( \psi(p), \, \psi (q) \bigl) \, + \,
v(p) \, \lambda'(\psi (q)) \, + \, v(q)\, \Lambda'(\psi (p)).
\eqlabel{morf2}
\end{eqnarray}
Through the above bijection, the Leibniz algebra map $\varphi =
\varphi_{(v, \, u, \, \psi)}$ corresponding to $(v, u, \psi) \in P^* \times k
\times {\rm End}_k(P)$ is given for any $(a, \, p) \in
k\times P$ by:
\begin{equation}\eqlabel{compmorfisme2}
\varphi: P_{(\lambda, \, \Lambda, \, f)}  \to P_{(\lambda', \,
\Lambda', \, f')}, \quad \varphi (a, \, p) = (a \, u + v(p), \,\,
\psi(p) ).
\end{equation}
Furthermore, $\varphi = \varphi_{(v, \, u, \, \psi)}$ is an
isomorphism if and only if $u \neq 0$ and $\psi \in {\rm Aut}_k
(P)$.

Indeed, first we notice that any linear map $\varphi: k \times P \to k
\times P$ is uniquely determined by a quadruple $(v, \, p_0, \, u,
\, \psi) \in P^* \times P \times k \times {\rm End}_k (P)$ such
that for any $a\in k$ and $p\in P$ we have:
$$
\varphi (a, \, p) = \varphi_{(v, \, p_0, \, u, \, \psi)} (a, \, p)
= (a\, u + v(p), \,\, a \,p_0 + \psi (p))
$$
We will prove that $\varphi = \varphi_{(v, \, p_0, \, u, \,
\psi)}: P_{(\lambda, \, \Lambda, \, f)}  \to P_{(\lambda', \,
\Lambda', \, f')}$ is a Leibniz algebra map if and only if $p_0 =
0$ and \equref{morf1}-\equref{morf2} hold. Indeed, if we write
\begin{equation} \eqlabel{morfismci}
\varphi \bigl( \{ (a, \, p), \,\, (b, \, q) \}  \bigl) = \{
\varphi (a, \, p), \,\, \varphi (b, \, q) \}
\end{equation}
for $(a, \, p) = (1, \, 0)$, $(b, \, q) = (1, \, 0)$ and
respectively $a = b = 0$ we obtain
$$
\lambda (q) \, p_0 = \Lambda(p)\, p_0 = f(p, \, q) \, p_0 = 0
$$
for all $p$, $q\in P$. It follows from here that $p_0 = 0$ since
$(\lambda, \, \Lambda, \, f) \neq (0, \, 0, \, 0)$. Hence,
$\varphi$ has the form given by \equref{compmorfisme2}. Taking
this into account, we can see that \equref{morfismci} holds for
$(a, p) = (1, 0)$ and $(b, q) = (0, q)$ (resp. $(a, p) = (0, p)$
and $(b, q) = (1, 0)$) if and only if the left hand side (resp.
right hand side) of the compatibility condition \equref{morf1}
holds. Finally, we note that \equref{morf2} is the same as
\equref{morfismci} applied for $(a, p) = (0, p)$ and $(b, q) = (0,
q)$. The fact that $\varphi_{(v, \, u, \, \psi)}$ is bijective if
and only if $u \neq 0$ and $\psi$ is bijective is straightforward
- we only mention that in this case the inverse of $\varphi$ is
given by $\varphi_{(v, \, u, \, \psi)}^{-1} = \varphi_{(
- u^{-1} \, v \circ \psi^{-1}, \, u^{-1}, \, \psi^{-1})}$.

In conclusion, we obtain that two Leibniz algebras $P_{(\lambda,
\, \Lambda, \, f)}$ and $P_{(\lambda', \, \Lambda', \, f')}$ are
isomorphic if and only if there exists a triple $(v, u, \psi) \in
P^* \times (k \setminus \{0\}) \times {\rm Aut}_k(P)$ satisfying
the following compatibilities for all $p$, $q\in P$:
\begin{eqnarray}
\lambda &=& \lambda' \circ \psi, \qquad \Lambda = \Lambda' \circ
\psi  \eqlabel{izomrofa} \\
u\, f(p, \, q) &=& f' \bigl( \psi(p), \, \psi (q) \bigl) \, + \,
v(p) \, \lambda'(\psi (q)) \, + \, v(q)\, \Lambda'(\psi (p)).
\eqlabel{izomrofb}
\end{eqnarray}
With this result in hand we return to the compatibility conditions
\equref{echivciudgen}. The first compatibility condition of
\equref{echivciudgen} requires a discussion on whether $\Lambda$
is the trivial map or $\Lambda \neq 0$. If $\Lambda \neq 0$, we
obtain that $\lambda = - \Lambda$ and the last compatibility of
\equref{echivciudgen} becomes precisely \equref{comp2} from the
defining axioms of the Lie algebra $P_{(\Lambda, \, f)}$; this and
the above claim prove $(4)$. Now, for $\Lambda = 0$ we also need
to discuss two cases: if $\lambda = 0$ then \equref{echivciudgen}
holds for any bilinear map $f \in {\rm Bil} \, (P \times P, \, k)$
and the associated metabelian product $k\# P$ is just the Leibniz
algebra $P(f)$ from $(3)$. Finally, we are left to discuss the
last case: $\Lambda = 0$ and $\lambda \neq 0$. Then
\equref{echivciudgen} becomes $\lambda (r) f (p, \, q) = \lambda
(q) f (p, \, r)$, for all $p$, $q\in P$. Let $r_0\in P$ such that
$\lambda (r_0) \neq 0$. It follows that $f (p, \, q) = \lambda (q)
f(p, \, r_0) \lambda (r_0)^{-1}$, i.e. there exists a linear map
$\theta \in P^*$ such that $f(p, \, q) = \theta (p) \lambda(q)$,
for all $p$, $q\in P$. We denote by $P^{(\lambda, \, \theta)}$ the
associated metabelian product $k\#P$. Using,
\equref{izomrofa}-\equref{izomrofb} we can easily see that there
exists an isomorphism of Leibniz algebras $P^{(\lambda, \,
\theta)} \cong P^{(\lambda, \, 0)}$, for all $\lambda \in P^*
\setminus \{0\}$ and $\theta \in P^*$ (take $u := 1$, $\psi :=
{\rm Id}_P$ and $v := \theta$ in
\equref{izomrofa}-\equref{izomrofb}). Thus, we have obtained that
for $\Lambda = 0$ and $\lambda \neq 0$, the corresponding
metabelian product $P_{(\lambda, \, \Lambda, \, f)}$ is isomorphic
to the Leibniz algebra $P^{\lambda}$ from $(2)$. The statement in
$(2)$ follows from a basic fact: namely, for any non-zero linear
maps $\lambda$, $\lambda' \in P^*$, there exists $\psi \in {\rm
Aut}_k (P)$ such that $\lambda = \lambda' \circ \psi$. The proof
is finished once we observe that the statement in $(1)$ follows
from \equref{izomrofa} - \equref{izomrofb} and the way the Leibniz
algebras $P^{\lambda}$, $P (f)$ and respectively $P_{(\Lambda, \,
f)}$ have been constructed.
\end{proof}

As a bonus, the proof of \thref{clasific1} provides the
description of the automorphisms groups of the Leibniz algebras
$P^{\lambda}$, $P (f)$ and $P_{(\Lambda, \, f)}$. For any vector
space $P$, consider $(P^*, +)$ to be the underlying abelian group
of the dual $P^*$ while $k^*$ stands for the group of units of the
field $k$. Then we can easily prove that the set $P^* \times k^*
\times {\rm Aut}_{k} (P)$ has a group structure with the
multiplication given for any $(v, \, u, \, \psi)$ and $(v', \, u',
\, \psi') \in P^* \times k^* \times {\rm Aut}_k (P)$ by:
\begin{equation}\eqlabel{strcturgr}
(v, \, u, \,  \psi) \cdot (v', \, u', \,  \psi') := (u\,
v' + v\circ \psi', \, uu', \, \psi\circ \psi')
\end{equation}
the unit being $(0, \, 1_k, \, {\rm Id}_P)$ while the inverse of
any $(v, \, u, \,  \psi)$ is given by $(\, - u^{-1} \, v\circ
\psi^{-1}, \, u^{-1}, \, \psi^{-1})$. We can see that $(P^*, +)
\cong P^* \times \{1\} \times \{ {\rm Id}_P\}$ is a normal
subgroup of $(P^* \times k^* \times {\rm Aut}_{k} (P), \, \cdot )$
and the direct product $k^* \times {\rm Aut}_{k} (P) \cong \{0\}
\times k^* \times {\rm Aut}_{k} (P)$ is a subgroup of $(P^* \times
k^* \times {\rm Aut}_{k} (P), \, \cdot )$. Furthermore, since $(v,
\, u, \psi) = (v \circ \psi^{-1}, \, 1_k, \, {\rm Id}_P) \cdot (0,
\, u, \, \psi)$ we obtain a factorization $P^* \times k^* \times
{\rm Aut}_{k} (P) = P^* \cdot \bigl( k^* \times {\rm Aut}_{k} (P)
\bigl)$, which shows that $(P^* \times k^* \times {\rm Aut}_{k}
(P), \, \cdot )$ is a semidirect product $P^* \ltimes \bigl(k^*
\times {\rm Aut}_{k} (P) \bigl)$ of groups.

For any $\lambda \in P^*$ we denote by ${\rm Aut}_{\lambda} (P) :=
\{ \,\psi \in {\rm Aut}_{k} (P) \, | \, \lambda \circ \psi =
\lambda \, \}$ the subgroup of all $\lambda$-invariant
automorphisms of a vector space $P$. Directly from the proof of
\thref{clasific1} and taking into account formula $\varphi_{(v, \,
u, \, \psi)} \circ \varphi_{(v', \, u', \, \psi')} = \varphi_{(av'
+ v\circ \psi', \, uu', \,  \psi \circ \psi')}$, for all $(v, \,
u, \, \psi)$, $(v', \, u', \, \psi') \in P^* \times k^* \times
{\rm End}_k (P)$ we obtain:

\begin{corollary} \colabel{automr}
Let $P$ be a vector space. Then:

$(1)$ There exists an isomorphism of groups ${\rm Aut}_{\rm Lbz} (P^{\lambda}) \cong
k^* \times {\rm Aut}_{\lambda} (P)$, for all $\lambda \in P^* \setminus \{0\}$, where
the right hand side is a direct product of groups.

$(2)$ For any bilinear map $f \in {\rm Bil} \, (P \times P, \, k)
\setminus \{0\}$ there exists an isomorphism of groups
$$
{\rm Aut}_{\rm Lbz} \bigl( P(f)\bigl) \, \cong \{ \, (v, \, u, \,
\psi) \in P^* \times k^* \times {\rm Aut}_{k} (P) \, | \,\, u f(
-, \, -) = f \bigl(\psi(-) , \, \psi(-) \bigl) \, \}
$$
where the right hand side has a group structure via the
multiplication given by \equref{strcturgr}.

$(3)$ For any pair $(\Lambda, \, f) \in (P^* \setminus \{0\})
\times {\rm Bil} \, (P \times P, \, k)$ satisfying \equref{comp2}
there exists an isomorphism of groups $ {\rm Aut}_{\rm Lie} \bigl(
P_{(\Lambda, \, f) } \bigl) \, \cong \, {\mathcal G} (P \, |
\Lambda, f)$, where ${\mathcal G} (P \, | \, \Lambda, f)$ is the
set consisting of all triples $(v, \, u, \, \psi) \in P^* \times
k^* \times {\rm Aut}_{k} (P)$ which satisfy the following
compatibilities for any $p$, $q\in P$
$$
\Lambda = \Lambda \circ \psi, \quad u \, f(p, \, q) = f \bigl(
\psi(p), \, \psi (q) \bigl) \, + \, \Lambda \bigl( v(q) p - v(p) q
\bigl)
$$
with the group structure given by \equref{strcturgr}.
\end{corollary}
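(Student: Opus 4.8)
The plan is to specialize the Claim established in the proof of \thref{clasific1} to the case where the source and target coincide, and then read off each automorphism group directly from the compatibility conditions \equref{morf1}--\equref{morf2}. Recall that each of the three families arises as a metabelian product for a specific triple: $P^{\lambda} = P_{(\lambda, \, 0, \, 0)}$, $P(f) = P_{(0, \, 0, \, f)}$, and $P_{(\Lambda, \, f)} = P_{(-\Lambda, \, \Lambda, \, f)}$, the last identification coming from the relation $\lambda = -\Lambda$ forced in \equref{echivciudgen} when $\Lambda \neq 0$. The Claim furnishes, for each such triple, a bijection $(v, \, u, \, \psi) \mapsto \varphi_{(v, \, u, \, \psi)}$ between Leibniz algebra morphisms and triples satisfying \equref{morf1}--\equref{morf2}, together with the criterion that $\varphi_{(v, \, u, \, \psi)}$ is an isomorphism exactly when $u \neq 0$ and $\psi \in {\rm Aut}_k(P)$. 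Since the composition formula $\varphi_{(v, \, u, \, \psi)} \circ \varphi_{(v', \, u', \, \psi')} = \varphi_{(uv' + v \circ \psi', \, uu', \, \psi \circ \psi')}$ matches the multiplication \equref{strcturgr}, this bijection transports the group structure, so in each case it suffices to describe which triples $(v, \, u, \, \psi) \in P^* \times k^* \times {\rm Aut}_k(P)$ produce automorphisms.

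For part $(1)$ I would put $(\lambda', \, \Lambda', \, f') = (\lambda, \, 0, \, 0)$ in \equref{morf1}--\equref{morf2}. The second equation of \equref{morf1} is vacuous, the first gives $\lambda = \lambda \circ \psi$ (as $u \neq 0$), i.e. $\psi \in {\rm Aut}_{\lambda}(P)$, and \equref{morf2} collapses to $v(p)\,\lambda(q) = 0$ for all $p$, $q$, forcing $v = 0$ since $\lambda \neq 0$. Thus the automorphism triples are precisely $(0, \, u, \, \psi)$ with $u \in k^*$ and $\psi \in {\rm Aut}_{\lambda}(P)$, and on these \equref{strcturgr} reduces to $(0, \, u, \, \psi) \cdot (0, \, u', \, \psi') = (0, \, uu', \, \psi \circ \psi')$, yielding the direct product ${\rm Aut}_{\rm Lbz}(P^{\lambda}) \cong k^* \times {\rm Aut}_{\lambda}(P)$.

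For part $(2)$, setting $(\lambda', \, \Lambda', \, f') = (0, \, 0, \, f)$ renders both equations of \equref{morf1} vacuous, while \equref{morf2} becomes $u\, f(p, \, q) = f(\psi(p), \, \psi(q))$, with $v$ entirely free; this is exactly the stated description. For part $(3)$ I would take $(\lambda', \, \Lambda', \, f') = (-\Lambda, \, \Lambda, \, f)$, so that both equations of \equref{morf1} reduce to the single condition $\Lambda = \Lambda \circ \psi$, and \equref{morf2} turns into $u\, f(p, \, q) = f(\psi(p), \, \psi(q)) + \Lambda\bigl(v(q) p - v(p) q\bigr)$, matching $\mathcal{G}(P \,|\, \Lambda, f)$. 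In all three cases the set of qualifying triples is automatically closed under \equref{strcturgr} and under the inverse formula $\varphi_{(v, \, u, \, \psi)}^{-1} = \varphi_{(-u^{-1} v \circ \psi^{-1}, \, u^{-1}, \, \psi^{-1})}$, since composites and inverses of automorphisms are again automorphisms and the bijection respects composition; hence no separate subgroup verification is needed. The only genuinely delicate point is the bookkeeping in part $(3)$---correctly tracking the sign introduced by $\lambda' = -\Lambda$ and confirming that it is precisely the $\Lambda$-invariance $\Lambda \circ \psi = \Lambda$ which allows the two summands $v(p)\,\lambda'(\psi(q))$ and $v(q)\,\Lambda'(\psi(p))$ to combine into the compact expression $\Lambda\bigl(v(q) p - v(p) q\bigr)$.
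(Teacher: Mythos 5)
Your proposal is correct and follows exactly the paper's intended argument: the corollary is deduced there ``directly from the proof of \thref{clasific1}'' by specializing the Claim (the bijection $(v,\,u,\,\psi) \mapsto \varphi_{(v,\,u,\,\psi)}$ with conditions \equref{morf1}--\equref{morf2}) to endomorphisms of $P_{(\lambda,\,0,\,0)}$, $P_{(0,\,0,\,f)}$ and $P_{(-\Lambda,\,\Lambda,\,f)}$, and using the composition formula to match the group law \equref{strcturgr}. Your case-by-case reading of the conditions (in particular forcing $v=0$ in part $(1)$, and using $\Lambda\circ\psi=\Lambda$ to rewrite the two cross terms as $\Lambda\bigl(v(q)p - v(p)q\bigr)$ in part $(3)$) is exactly the bookkeeping the paper leaves implicit, so nothing further is needed.
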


\section*{Acknowledgement}

The authors are grateful to Otto H. Kegel for his comments on a
previous version of the paper.

\end{document}